\begin{document}
\title{Simple homotopy type of the Hamiltonian Floer complex}
\author{Sebastian Pöder Balkest\r{a}hl}
\date{\today}

\maketitle

\begin{abstract}
	For an aspherical symplectic manifold, closed or with convex contact boundary, and with vanishing first Chern class, a Floer chain complex is defined for Hamiltonians linear at infinity with coefficients in the group ring of the fundamental group. For two non-degenerate Hamiltonians of the same slope continuation maps are shown to be simple homotopy equivalences. As a corollary the number of contractible Hamiltonian orbits of period $1$ can be bounded from below.
\end{abstract}

\section{Introduction}
Let $(\M, \omega)$ be a symplectic manifold, closed or the completion of a compact manifold $\LdomainM$ with convex boundary (see e.g. \cite{EliGro91}). 
Assume that $\omega|_{\pi_2 (\M)}$ and $c_1(M)|_{\pi_2 (\M)}$ vanish. For a non-degenerate Hamiltonian $H \fr \R /\Z \times \M \to \R$, linear at infinity if $\M$ is open, and an almost complex structure satisfying a regularity assumption, let $CF^*(H)$ denote the Hamiltonian Floer complex with coefficients twisted by the fundamental group of $\M$, reviewed in Section  \ref{sec:floer_setup}.

\begin{thm}\label{thm:mainthm}
	Let $(G,J_G)$ and $(H,J_H)$ be regular pairs where the Hamiltonians $G, H$ are linear at infinity with the same slope. Then a continuation map 
	\[
		CF^*(G) \to CF^*(H),
	\]
	induced by any regular homotopy from $G$ to $H$ and from $J_G$ to $J_H$, is a simple homotopy equivalence.
\end{thm}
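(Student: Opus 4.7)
The strategy will be to reduce the theorem to a bifurcation analysis along a generic one-parameter family of pairs, decompose the path into elementary segments, and identify each segment's continuation map with a specific simple move on chain complexes of free $\Z[\pi_1(\M)]$-modules.

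First, I would invoke the standard homotopy-of-homotopies argument: any two continuation maps induced by regular homotopies from $(G,J_G)$ to $(H,J_H)$ with fixed slope are chain homotopic, by the usual parametrized moduli-space count. This reduces the theorem to verifying the claim for one conveniently chosen homotopy $(H_s, J_s)_{s \in [0,1]}$, which I would take to be a generic smooth path of linear-at-infinity pairs of constant slope. I would then pick a fine subdivision $0 = t_0 < t_1 < \cdots < t_N = 1$ such that each subinterval contains at most one bifurcation, and use the gluing/composition property of continuation maps to write the given continuation map, up to chain homotopy by a simple homotopy, as a product of the continuation maps across each subinterval.

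Second, I would classify the generic codimension-one bifurcations. Away from a finite set $S \subset [0,1]$, the pair $(H_s, J_s)$ is regular; at each $s_0 \in S$ either (a) a \emph{birth/death} of orbits occurs, in which two 1-periodic orbits $x_\pm$ of consecutive Conley--Zehnder index collide and disappear, or (b) a \emph{handle slide} occurs, where a rigid index-$0$ Floer cylinder between two orbits of the same index exists at the isolated parameter $s_0$. For subintervals containing a birth/death bifurcation, the continuation map is, in canonically transported bases, an elementary expansion (or collapse) by the acyclic two-term summand $\langle x_+, x_-\rangle$ with $\partial x_+ = x_-$. For subintervals containing a handle slide, the continuation map is the identity plus a single off-diagonal group-ring entry, an elementary matrix in $E(\Z[\pi_1(\M)])$, hence simple by definition. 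For bifurcation-free subintervals, a careful action/energy filtration argument shows that in the transported basis the continuation map is unipotent (identity plus a strictly action-decreasing nilpotent perturbation), and therefore also simple. Composing these simple equivalences yields the theorem.

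The main obstacle I anticipate is the detailed identification of the continuation map across each bifurcation with the stated simple move. At a handle slide in particular, one must verify that the contribution of the rigid index-$0$ cylinder assembles into a single invertible group-ring coefficient, so that the continuation is a genuine elementary matrix and not merely a triangular unit; this requires careful orientation and sign analysis together with a gluing argument matching the $s$-dependent moduli space near $s_0$ to a standard local model. The hypotheses $\omega|_{\pi_2(\M)} = c_1(\M)|_{\pi_2(\M)} = 0$ and asphericity are essential here, since they ensure that signed counts in $\Z[\pi_1(\M)]$ are well defined with no correction from sphere bubbling, so that the simple homotopy class of each local model, and therefore of the composition, is unambiguous.
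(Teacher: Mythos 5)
Your proposal takes a genuinely different route from the paper. You run the classical bifurcation analysis: subdivide a generic path, classify the codimension-one events into birth/death and handle slides, and identify each crossing with an elementary expansion/collapse or an elementary matrix. The paper explicitly avoids this (it contrasts itself with the bifurcation/stabilization approach of Sullivan and Dimitroglou Rizell--Golovko) and instead uses only action--energy arguments: for $s_\pm$ close to a fixed $s_0$ it filters $CF^*(H^{s_\pm})$ by the action spectrum of $H^{s_0}$, shows via Gromov compactness with energy tending to zero that on the associated graded pieces both the differentials and the short-step continuation map split according to which orbit of $H^{s_0}$ the generators converge to \emph{and} have integer (not just group-ring) coefficients, and then concludes simplicity for free from the facts that equivalences of $\Z$-complexes are always simple and that simplicity is preserved under extension of scalars and under the filtration lemma. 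No moduli space at a degenerate parameter is ever analyzed, and no local model for a bifurcation is needed. What your approach would buy, if completed, is a finer structural statement (an explicit factorization of the continuation map into elementary moves); what the paper's approach buys is that it sidesteps the analysis you correctly flag as the main obstacle.

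That obstacle is, however, a genuine gap rather than a routine verification. At a birth/death the two colliding orbits are degenerate at $s_0$, and identifying the continuation map across such a parameter with an elementary expansion/collapse requires a gluing and uniqueness theorem for Floer trajectories near a degenerate orbit (one must show, e.g., that $\partial x_+ = \pm x_- + (\text{terms of lower action})$ with coefficient exactly a unit $\pm g$, and that the index-one moduli spaces deform coherently through the degeneration). This is precisely the hard analytic content of Lee's work on Floer-theoretic bifurcations and the reason Sullivan and \cite{DRG17} pass through a geometric (Morse-theoretic) stabilization rather than doing the bifurcation analysis on the Floer complex directly. Similarly, at a handle slide you need a parametrized gluing statement to see that the differentials before and after differ by conjugation by a single elementary matrix. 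As written, your proof defers exactly the steps on which the whole difficulty is concentrated; by contrast, the unipotent/action-filtration argument you sketch for the bifurcation-free subintervals is essentially the only kind of argument the paper ever needs, applied uniformly (including across what would be your bifurcation parameters) together with the observation that integral blocks are automatically simple.
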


Simple homotopy equivalence is an improvement compared to Floer's result of homotopy equivalence \cite{Floer88unreg}. Simple homotopy and the related notion of torsion has been studied for Floer complexes for instance in \cite{Sullivan02} \cite{Lee05} \cite{AK16} \cite{Suarez17} \cite{DRG18}. 
The complex $CF^*(H)$ also appears in \cite{OnoPaj16}.
In particular for the case of closed $\M$, Theorem \ref{thm:mainthm} and Corollary \ref{cor:stable_Morse_bound} appears in \cite{DRG17}, which is based on \cite{Sullivan02}. The latter performs bifurcation analysis using a geometric stabilization while here we use action-energy arguments.

Theorem \ref{thm:mainthm} is proved in Section \ref{sec:floer_setup} as Proposition \ref{prop:conclusion}.

The complex $CF^*(H)$ is generated by contractible $1$-periodic orbits of the Hamiltonian flow of $H$. Thus their number is constrained by the minimum rank among complexes with the simple homotopy type of $CF^*(H)$. If the slope of $H$ is small, this is the simple homotopy type of a Morse complex on $\LdomainM$, since for $H$ a small Morse function with small slope Floer's identification \cite{Floer88} of Floer and Morse complexes holds.

Damian  \cite{Damian02} shows that any complex which is simple homotopic to a Morse complex on $\LdomainM$ may be realized as the Morse complex of a stable Morse function on $\LdomainM$: a Morse function $\LdomainM \times \R^{2k} \to \R$ which is a compact perturbation of a Morse function on $\LdomainM$ plus a non-degenerate quadratic form on $\R^{2k}$. See Theorem \ref{thm:Damian_thm}. The stable Morse number of $\LdomainM$ is the minimum number of critical points of such functions. Thus
\begin{cor}\label{cor:stable_Morse_bound}
	Suppose that $H \fr \R/\Z \times \M \to \R$ is non-degenerate and linear at infinity with sufficiently small slope. 
Then the number of contractible 1-periodic orbits of the Hamiltonian vector field of $H$ is at least equal to the stable Morse number of $\LdomainM$.
\end{cor}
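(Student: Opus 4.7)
The plan is to reduce to a Morse-theoretic picture via a continuation to a small Hamiltonian, and then invoke Damian's realization theorem.

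First I would pick a $C^2$-small Morse function $f$ on $\LdomainM$ and extend it to a Hamiltonian $H_f$ on $\M$ that is linear at infinity with exactly the same slope as $H$. The hypothesis that the slope of $H$ is sufficiently small is precisely what allows this with $f$ still small enough to have no $1$-periodic orbits outside its critical set on $\LdomainM$ and no $1$-periodic orbits in the conical end. Paired with a suitable almost complex structure $J_f$ making $(H_f,J_f)$ a regular pair, Floer's identification \cite{Floer88} gives a canonical isomorphism of chain complexes $CF^*(H_f) \cong CM^*(f)$, where $CM^*(f)$ is the Morse complex of $f$ on $\LdomainM$ with the same $\Z[\pi_1(\M)]$ coefficients used in Section \ref{sec:floer_setup}.

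Next I would apply Theorem \ref{thm:mainthm} to the regular pairs $(H,J_H)$ and $(H_f,J_f)$. Since these share the same slope by construction, any continuation map between them is a simple homotopy equivalence, so $CF^*(H)$ and $CM^*(f)$ have the same simple homotopy type. Damian's Theorem \ref{thm:Damian_thm} then produces a stable Morse function on $\LdomainM$ whose Morse complex realizes this simple homotopy type, and in particular has precisely as many critical points as the rank of $CF^*(H)$ over $\Z[\pi_1(\M)]$. That rank equals the number of contractible $1$-periodic orbits of $H$, and the stable Morse number of $\LdomainM$ is by definition the minimum of the critical point count over all stable Morse functions; hence the stable Morse number is at most the number of such orbits, which is the claim.

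The main obstacle, and the only thing requiring real care, is quantifying the phrase ``sufficiently small slope'' so that the auxiliary data above can actually be produced: the slope must be small enough that a $C^2$-small $f$ extending linearly with that slope has its only $1$-periodic orbits inside $\LdomainM$ and coinciding with critical points, and that the regularity conditions for both Theorem \ref{thm:mainthm} and Floer's identification can be arranged simultaneously (possibly by perturbing $J_f$, since $H_f$ is fixed once the slope is chosen). Beyond fixing this quantitative threshold, the proof is a direct concatenation of Theorem \ref{thm:mainthm}, the Floer--Morse isomorphism, and Theorem \ref{thm:Damian_thm}.
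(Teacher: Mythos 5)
Your proposal is correct and follows essentially the same route the paper takes: compare $CF^*(H)$ via a continuation map (Theorem \ref{thm:mainthm}, same slope) to the Floer complex of a $C^2$-small Morse function, identify the latter with the Morse complex over $\Z[\pi_1]$ via Floer's theorem, and then apply Theorem \ref{thm:Damian_thm} to bound the stable Morse number by the rank of $CF^*(H)$. The caveats you flag about choosing the slope small enough for Floer's identification and arranging regularity are exactly the ones the paper implicitly relies on.
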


The Arnold conjecture gives the Morse number of $\LdomainM$ as a lower bound (the minimum number of critical points among Morse functions respecting the boundary). Clearly this would be better than Corollary \ref{cor:stable_Morse_bound}, and indeed there are examples due to Damian \cite{Damian02} where the Morse and stable Morse numbers differ. These examples depend only on the fundamental group; by work of Gompf \cite{Gompf94} any finitely presented group may be realized as the fundamental group of a symplectic manifold.

Examples of closed symplectic manifolds with $\pi_2 = 0$, hence satisfying the assumptions of Theorem \ref{thm:mainthm}, are given in \cite{DRG17}, and there exists also  examples of Gompf \cite{Gompf98} with $\pi_2 \neq 0$ but $\omega|_{\pi_2} = c_1 |_{\pi_2} = 0$ .

Here we note that the stable Morse number can be strictly larger than any bound coming from the cohomology of $\M$. Firstly, base change using the augmentation $\epsilon \fr \Lambda = \Z [\pi_1(\M)] \to \Z$ which map all group elements to $1$ sends the complex $CF^*(H)$ to the Floer complex $CF^*(H;\Z)$ with integer coefficients, which for small slopes computes the cohomology of $\M$ shifted down by $n=\frac12 \dim \M$. Secondly, any complex in the simple homotopy type of $CF^*(H)$ has a module in degree $1-n$ of rank at least equal to the minimal number $\delta$ of generators  of the $\Lambda$-module $\ker \epsilon$ \cite{Damian02}. Hence if for instance the fundamental group of $\M$ is nontrivial, finite, and perfect ($\pi_1 = [\pi_1, \pi_1]$) then $\delta \geq 2$ \cite{Damian02}, and hence there must be at least two generators in degree $1-n$ not seen in cohomology. For example we may find a Weinstein domain with $1$- and $2$-handles \cite{Weinstein91} attached according to a presentation of the desired group.

I thank my advisor Thomas Kragh for discussions and comments on previous versions of this document.

\section{Simple-homotopic chain complexes and stable Morse numbers}

General references for simple-homotopy theory are \cite{Whitehead50} or \cite{Cohen73}, but here the presentation is closer to \cite{AK16} and \cite{Damian02}.

Let $\Lambda$ be the ring $\Z [\pi_1(\LdomainM)]$, and let $C^*$ be a 
free and finitely generated complex over $\Lambda$, with a basis $\underline c$ which is a union of bases $\underline c_i$ of $C^i$. The simple homotopy type of $(C^*, \underline c)$ is the class of such based complexes under the equivalence relation generated by basis-preserving isomorphism and the following moves. 
\begin{enumerate}
	\item A trivial summand $0 \to \Lambda^r \xrightarrow{\mathrm{id}} \Lambda^r \to 0$, with preferred basis at both positions the standard basis of $\Lambda^r$, can be added or removed.
	\item The basis $\underline c_i$ can be replaced by a basis $\underline c'_i$ so that the change of basis matrix is either the identity matrix plus one off-diagonal element from $\Lambda$, or the identity matrix where one diagonal element is replaced by $\pm g \in \pm \pi_1(\LdomainM)$.
\end{enumerate}
Now let $C^*, D^*$ be free and finitely generated complexes with preferred bases $\underline c$ and $\underline d$. Write $C^*[1]$ for the shift 
  $(C[1])^k = C^{k+1}$. A chain homotopy equivalence $f \fr C^* \to D^*$ is \emph{simple} if the cone of $f$, 
\[
C_f = \left ( C^{*}[1]\oplus D^*, d = \begin{pmatrix} -d_C & 0 \\ f & d_D \end{pmatrix} \right ),
\] 
with the basis $\underline c \cup \underline d$, has the same simple homotopy type as the zero complex. If $f$ is simple, $C^*$ and $D^*$ have the same simple homotopy type \cite{Whitehead50}.

Next suppose that $C^*$ carry an increasing filtration $F^p C^* \subset F^{p+1} C^*$ with associated quotient groups $G^pC^* = F^p C^* / F^{p-1} C^*$, and similarly for $D^*$. The following is essentially Corollary 2.4 in \cite{AK16}. 
\begin{lma}\label{lma:alg_lma}
	Suppose that 
each filtration level $F^p C^*, F^p D^*$ is free on a subset of the 
bases $\underline c, \underline d$,
 and suppose that $ f\fr C^* \to D^*$ is an equivalence respecting the filtration. Moreover suppose that each of the induced maps $f^p \fr G^p C \to G^p D$ are simple homotopy equivalences. Then $f$ is a simple homotopy equivalence.
\end{lma}
\begin{proof}
	The complex $B^* := C_f$ has a filtration $F^p C_f = (F^p C^*[1]) \oplus F^p D^*$ with associated quotients $G^p B^* = G^pC_f = C_{f^p}$.  
Let $k$ be the highest filtration level, so $F^{k-1}B^* \subsetneq F^kB^* = B^*$. The complex $G^kB^*$ may by assumption be reduced to zero by a sequence of moves, which we will lift to reduce $F^k B^* $ to $F^{k-1} B^*$.

	Only when removing a trivial summand do we encounter any difficulity. Suppose basis elements $r \in F^k B^i, s \in F^k B^{i+1}$ span, in $G^k B^*$, a trivial summand with $d \bar r = \bar s$. However in $F^k B^*$ we may have that $d r = s + t$, where $t \in F^{k-1}B^*$. We replace the basis element $s\in F^k B^*$ with $s' = s+t \in F^k B^*$. Thus $d r = s'$ and this new basis still satisfies the assumptions. Moreover, let $r^\perp \subset F^k B^i$ be the span of the other basis elements in this degree and similarly $s^\perp \subset F^k B^{i+1}$. Since the differential of $G^k B^*$ takes $r^\perp /F^{k-1} B^i$ to $s^\perp / F^{k-1} B^{i+1}$, we have $d r^\perp \subset s^\perp$, too. Hence $r, s'$ span a trivial summand of $F^kB^*$, which we then may remove.

	Thus $F^k B^*$ may be reduced to $F^{k-1} B^*$. Proceeding inductively we can reduce $F^k B^* = C_f$ to the zero complex.	
\end{proof}

Let $(f, V)$ be a Morse-Smale pair on $\LdomainM$; if there is boundary, we assume that $V$ points out. By choosing a preimage in the universal cover of $\LdomainM$ for each critical point and an orientation of the descending manifold, we may construct  Morse complex $C^*(f,V)$ over the ring $\Lambda$. Namely if $\gamma$ is a rigid flow-line with $\gamma(-\infty) = x, \gamma(\infty) =y$, we lift $\gamma(\infty)$ to the chosen lift of $y$; the induced lift of $\gamma(-\infty)$ is then the chosen lift of $x$ acted on by some $g \in \pi_1(\LdomainM)$ and $\gamma$ contributes $\pm gx$ to $\partial y$. The simple homotopy type of the complex $C^*(f,V)$ is independent of the pair $(f,V)$.

When $\LdomainM$ is closed, the arguments of M. Damian \cite{Damian02} apply to show that the minimum rank of complexes in the simple homotopy type of $C^*(f,V)$ is equal to the stable Morse number $\mu_{st}$ of $\LdomainM$. This is the minimum number of critical points among Morse functions $h \from \LdomainM \times \R^{k} \times \R^k \to \R$ which, outside a compact set, agree with the quadratic form $Q(m, x, y) = |x|^2 - |y|^2$. If $W$ is a gradient-like vector field for $h$ which outside a compact set equals the gradient of $Q$ we say that $(h,W)$ is standard at infinity.

For $\LdomainM$ with boundary, the argument of Damian can be adopted slightly as in Proposition 2.9 in \cite{DRG18} to conclude similarly. Here the stable Morse functions $h$ are required to be of the form $h= Q + f$ outside a compact set in the interior, where $f$ is a function on $\LdomainM$ having $\partial \LdomainM$ contained in a regular level set. Similarly a gradient-like vector field $W$ of $h$ is required to point out of along $\partial \LdomainM \times \R^{2k}$ and outside a compact set be of the form $\nabla Q + V$, $V$ a vector field on $\LdomainM$. 
\begin{thm}[\cite{Damian02}, \cite{DRG17}] \label{thm:Damian_thm}
	Let $D^*$ be a free finitely generated based $\Lambda$-complex in the simple homotopy type of $C^* (f,V)$. There is a $k \in \N$, a stable Morse function $h \from \LdomainM \times \R^{k} \times \R^k \to \R$, and a gradient-like vector field $W$ such that $(h,W)$ is Morse-Smale and standard at infinity, and such that
	\[
		D^* = C^* (h,W)[k]. 
	\]
\end{thm}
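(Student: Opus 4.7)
The plan is to realize each algebraic elementary move (trivial-summand addition or removal, basis change of the two permitted forms) by a geometric modification of a stable Morse pair, following Damian's strategy. The starting point is $h_0 = f + Q$ on $\LdomainM \times \R^{2k_0}$ with gradient-like vector field $W_0 = V \oplus \nabla Q$, which is standard at infinity and satisfies $C^*(h_0,W_0)[k_0] = C^*(f,V)$. Since $D^*$ lies in the simple homotopy type of $C^*(f,V)$, a finite sequence of elementary moves transforms $C^*(f,V)$ into $D^*$; I would realize each move in order on the stable side, restabilizing $k$ upward whenever extra room is required.

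For the addition of a trivial summand $0 \to \Lambda \xrightarrow{\mathrm{id}} \Lambda \to 0$ in degrees $i, i+1$, I would perform a local birth deformation inside a ball in the interior of $\LdomainM \times \R^{2k}$, disjoint from existing critical points and from infinity. This yields a new canceling pair of critical points of indices $i+k$ and $i+k+1$ connected by exactly one rigid gradient trajectory, and the canonical lift and orientation of the newborn pair arrange that it contributes precisely the desired trivial summand. Removal is the inverse Cerf cancellation, available whenever the algebraic move is. A basis change of the form $\mathrm{id} + g E_{ab}$ would be realized as a handle slide: choosing a path representing $g \in \pi_1(\LdomainM)$ between the chosen lifts of $y_a$ and $y_b$, I isotope the descending sphere of $y_a$ across the ascending sphere of $y_b$ in a regular level set along this path, producing one extra rigid trajectory that contributes $\pm g$ to the boundary matrix. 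The diagonal basis changes multiplying a single generator by $\pm g \in \pm \pi_1$ require no modification of $(h,W)$: they amount only to relabeling the chosen lift of the corresponding critical point or reversing the orientation of its descending manifold in the auxiliary data defining $C^*(h,W)$.

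The main obstacle will be ensuring that every handle slide can actually be performed: the isotopy of one attaching sphere over another must remain an embedding throughout and transverse to the stable/unstable manifolds of the remaining critical points. Generically this needs sufficient codimension, and each stabilization adds two dimensions to the ambient manifold, so by fixing $k$ large in advance — comparable to the length of the algebraic sequence of moves — all the required slides become realizable. For $\LdomainM$ with boundary I would adapt the construction as in \cite[Proposition 2.9]{DRG18}, carrying out every birth, cancellation and slide in the interior of $\LdomainM \times \R^{2k}$ and away from infinity, so that the conditions on $W$ (outward-pointing along $\partial \LdomainM \times \R^{2k}$ and equal to $\nabla Q + V$ outside a compact set) are preserved at every stage. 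Concatenating the geometric realizations then produces the desired Morse-Smale stable pair $(h,W)$ with $C^*(h,W)[k] = D^*$.
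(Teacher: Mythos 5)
Your overall strategy is the same as the paper's (and Damian's): stabilize $(f,V)$ to $(f+Q,\,V+\nabla Q)$ and realize each elementary algebraic move by a local Morse modification performed in the interior, away from infinity and from $\partial \LdomainM\times\R^{2k}$. The births of cancelling pairs, the handle slides along paths representing $g\in\pi_1(\LdomainM)$, and the treatment of the diagonal basis changes by relabeling lifts and reversing orientations are all as in the paper's argument.

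There is, however, one point where your sketch asserts something that is false as stated and which hides the crux of the proof: ``Removal is the inverse Cerf cancellation, available whenever the algebraic move is.'' Algebraic removability of a trivial summand only guarantees that the $\Lambda$-coefficient between the two generators is $\pm g$, i.e.\ that the \emph{signed count} of connecting rigid trajectories is a single group element; geometric cancellation of the critical pair requires that there be exactly \emph{one} connecting trajectory. Bridging this discrepancy needs an additional move absent from your list: cancelling a pair of excess trajectories of opposite sign whose concatenation is a nullhomotopic loop, by a Whitney-trick isotopy. This is precisely where the stabilization is genuinely used --- one needs the indices to lie in the range (roughly $3,\dotsc,\dim\LdomainM+2k-3$) where embedded Whitney discs exist --- and it is why the resulting lower bound is the \emph{stable} Morse number rather than the Morse number. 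Relatedly, the role of $k$ is not to grow with the number of algebraic moves, and restabilizing mid-process would shift the target complex; a single $k$ fixed at the outset so that $D^*[-k]$ sits in the stable degree range suffices. Finally, to guarantee that each local modification changes the count of connecting trajectories only in the intended way, one should fix in advance a compact set $K\subset\operatorname{int}(\LdomainM)\times\R^{2k}$ containing all critical points and connecting trajectories and chosen so that any trajectory leaving $K$ never re-enters it, and support all moves inside it.
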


\begin{proof}
	We merely emphasize the important points of the arguments of \cite{Damian02}, \cite{DRG17}. Choose $k$ such that $D^*[-k]$ is supported in the degrees $3, \dotsc, \dim \LdomainM + 2k -3$. One starts with a pair $(f,V)$ on $\LdomainM$ and stabilize to $(f+Q, V + \nabla Q)$. Then, Morse moves are performed on the latter to mimic the algebraic manipulations to get from $ C^* (f+Q, V + \nabla Q) = C^*(f,V)[-k]$ to $D^*[-k]$. These Morse moves are raising/lowering the value at a critical point, performing handle slides, add a pair of critical points which form a trivial subcomplex, cancel two trajectories which together form a nullhomotopic loop, and cancel a pair of critical points which form a trivial subcomplex and with only one trajectory between them. 

To perform these we need a little room. Let $F \subset \LdomainM$ be the union of all critical points of $f$ and any trajectories between them, and let $U \subset \LdomainM \times \R^{2k}$ be an open set containing $F \times 0$ and contained in a compact set $K \subset \operatorname{int} (\LdomainM ) \times \R^{2k}$. All moves can be performed in $U$, so that the resulting pair $(h,W)$ equals $(f+Q, V + \nabla Q)$ outside $U$. We may choose $K$ such that any trajectory which exits $K$ in forwards or backwards time never enters it again, instead going to infinity or to the boundary $\partial \LdomainM \times \R^{2k}$. From these considerations the arguments of Damian \cite{Damian02} produces $(h,W)$ as desired.
\end{proof}

\section{Hamiltonian Floer setup and simple homotopy}\label{sec:floer_setup}

 For a linear at infinity function $H\fr \R/\Z \times \M \to \R$ we define $X = X_{H_t}$ by $\omega(-, X)  = dH_t$. The function $H$ is \emph{admissible} if all $1$-periodic orbits of $X$ are non-degenerate, in particular the slope at infinity is not equal to the length of any Reeb orbit. We choose a cylindrical almost complex structure $J= J_t$ compatible with $\omega$ and such that $(H,J)$ is a regular pair. 

Let $\mcP$ be the finite set of contractible $1$-periodic orbits of $X$. For any filling disc $w_{x}$ of $x \in \mcP$, we set the action of $x$ as 
\[
	\mcA_{H} (x) = \int_{D^2} w_x^* \omega - \int_0^1 H(t, x(t)) dt. 
\]
which, since $\omega |_{\pi_2} =0$, is independent of the filling. Grading $x$ by the Conley-Zehnder index $|x|$ is also independent of the filling as $c_1|_{\pi_2} = 0$. The Floer equation
\begin{equation}\label{eq:floereq}
 	\partial_s u + J (u) \partial_t u + \nabla H (u) = 0
\end{equation}
represents the positive gradient flow of $\mcA_H$. Fix for each $x\in \mcP$ a lift of $x$ to the cover $\widetilde \M$, and let $CF^*(H) $ be freely generated over $\Lambda$ by $\mcP$ with grading $|\cdot |$. For $x\in \mcP$, the differential $\partial x$ counts rigid solutions of \eqref{eq:floereq} with input $x$ at $s=+\infty$: as in the Morse case lift the curve to $\widetilde \M$ with initial condition the chosen lift of $x$; then as $s\to -\infty$ it limits to a lift of some $y \in \mcP$; we compare with the chosen lift for $y$ and record the curve's contribution as $\pm g y $ for some $g\in \pi_1(\M)$. The sign is determined using the coherent orientations of \cite{FH93}.

A regular path $(H^s, J^s)$, constantly equal to $(H^\pm, J^\pm)$ for $\pm s \gg 0$,  induces a continuation map $\Phi \fr CF^* (H^+) \to CF^*(H^-)$ as long as the slope of $H^s$ is increasing as $s$ decreases. Whenever the slope is constant, or if $\M$ is compact, $\Phi$ is a chain equivalence. 

In the latter case $\Phi$ is a simple chain equivalence, which we will argue in this section. We may assume that the path $(H^s, J^s)$ is such that there is a dense set of $s' \in \R$ where the pairs $(H^{s'}, J^{s'})$ are regular, and hence the complexes $CF^*(H^{s'})$ are defined; moreover we may assume that for all $s$ the sets $\mcP (H^s)$ are finite. If $s' < s''$ are sufficiently close we will show that a continuation map $CF^*(H^{s''}) \to CF^*(H^{s'})$, induced by the path $H^s$, is a simple equivalence.  The map $\Phi$ is homotopic to a composition of such maps and hence also simple \cite{Cohen73}. Let us note some properties of continuation maps like $\Phi$. If the $y$-coefficient of $\Phi (x)$ is nonzero, there is a curve $u$ which solves \eqref{eq:floereq} with $J^s, H^s$ in place of $J,H$, and where $u(s, \cdot)$ has  limits $x$ respectively $y$ as $s\to \infty$ or $\to -\infty$. The energy 
\[
	E(u) = \frac12 \int_\R \int_0^1 | \partial_s u |^2 + |\partial_t u - X (u) |^2 dt ds \geq 0
\]
of such a curve satisfies 
\begin{equation} \label{eq:cont_energy}
	E(u) = \mcA_{H^+} (x) -\mcA_{H^-} (y) + \int_\R \int_0^1 (\partial_s H^s_t ) (u(s,t)) dt ds.
\end{equation}
Hence if $\partial_s H^s$ is small, the action of $y$ is at most slightly greater than the action of $x$. Now fix $s_0 \in \R$. We may assume that $H^s$ is non-degenerate for all $s$ near $s_0$ except possibly for $H^{s_0}$ \cite{Oh02}. Let $ b_1 < \dotsb < b_k$ be the action values of $\mcP (H^{s_0})$, and pick $d>0$  less than each $(b_{i+1}-b_i)/4$. Consider $s_\pm$ close to $s_0$ with $s_- < s_0 < s_+$ such that the complexes $CF^*(H^{s_{\pm}})$ are defined. If they are sufficiently close, the action values of $\mcP ({H^{s_\pm}})$ lie in a $d$-neighbourhood of $\{ b_1 , \dotsc b_k \}$, and 
\[
	\int_{s_-}^{s_+} \int_{S^1} \max_{x \in \M} | \partial_s H^s_t (x)| dt ds < d.
\]
For such $s_\pm$ we filter the complexes $CF^*(H^{s_\pm})$ by setting $F^p CF^* (H^{s_\pm})$ to be generated by those orbits in $\mcP (H^{s_\pm})$ of action $\leq (b_p + b_{p+1}) /2$; $F^0$ is the zero complex, and $F^k$ is the full complex. Now, let $\eta (s) $ denote a function increasing from $s_-$ to $s_+$.  From the path $(H^s, J^s)$ we get a path $(H^\eta,  J^\eta)$ which induces a continuation map $\phi \fr CF^*(H^{s_+}) \to CF^*(H^{s_-})$. Due to the choice of $d$, $\phi$ preserves the filtration and induces maps of associated groups
\[
	\phi^p \fr G^p (H^{s_+}) = F^p CF^* (H^{s_+}) / F^{p-1} CF^* (H^{s_+}) \to G^p (H^{s_-})
\]
which are also equivalences. From Lemma \ref{lma:alg_lma}, we need only show that each $\phi^p$ is simple. 

Now each $x\in \mcP (H^{s_\pm})$ tend to a unique $z \in \mcP(H^{s_0})$ as $s_\pm \to s_0$; let $B_z^\pm$ be the collection of those tending to $z$, and which therefore have action close to $\mcA_{H^{s_0}} (z)$. Thus we have a splitting of modules
\begin{equation}\label{eq:splitting}
	G^p(H^{s_\pm}) \cong \bigoplus_{z \in \mcP(H^{s_0}); \mcA_{H^{s_0}} = b_p} \Lambda B_z^\pm.
\end{equation}
 A key point is to show that this is a splitting of subcomplexes which the continuation maps preserve, compare \cite{AK16}. 

It is convenient to modify the chosen lifts for elements of $ B_z^\pm$; namely, after choosing a lift of $z$ we prefer to choose the lift of $x \in B^\pm_z$ which is close to the lift of $z$. In the chain complex this is a change of basis $x \to g x$ which does not affect whether an equivalence is simple. This is assumed for the next proposition.

\begin{prop}\label{prop:small_step_simple}
	There is $\delta > 0 $ such that if $| s_\pm - s_0 | < \delta$, the splitting \eqref{eq:splitting} is a splitting of subcomplexes, and $\phi^p (B^+_z) \subset \Lambda B^-_z$ for each $z$. Futhermore  with respect to the preferred basis,  the matrices of the restrictions $\partial^\pm \fr \Lambda B^\pm_z \to \Lambda B^\pm_z$,  $ \phi^p \fr \Lambda B^+_z \to \Lambda B^-_z$ have integer coefficients.
\end{prop}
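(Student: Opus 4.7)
The plan is to establish all three conclusions through a unified compactness argument as $s_\pm \to s_0$. The starting observation is the energy-action identity \eqref{eq:cont_energy}: for a Floer trajectory of $\partial^\pm$ between $x, y \in \mcP(H^{s_\pm})$ the energy is exactly $\mcA(x) - \mcA(y)$, while for a continuation trajectory contributing to $\phi^p$ the energy is $\mcA_{H^{s_+}}(x) - \mcA_{H^{s_-}}(y)$ plus an error bounded by $\int\int |\partial_s H^{\eta(s)}_t|\,dt\,ds$. As $s_\pm \to s_0$ both this error and the deviation of the actions of elements of $B_z^\pm$ from the corresponding values $\mcA_{H^{s_0}}(z)$ tend to zero.

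To prove (1) and (2), I would argue by contradiction: suppose the conclusion fails for arbitrarily small $\delta$. Then there exist sequences $s^{(n)}_\pm \to s_0$ together with rigid trajectories $u_n$, either of $\partial^\pm$ or of the continuation, connecting $x_n \in B_z^\pm$ to $y_n \in B_{z'}^\pm$ for some fixed pair $z \neq z'$ with $\mcA_{H^{s_0}}(z) = \mcA_{H^{s_0}}(z') = b_p$. The energy bound yields $E(u_n) \to 0$. Because $\omega|_{\pi_2(\M)} = 0$ forbids bubbling and the linear-at-infinity hypothesis together with the maximum principle for cylindrical $J$ confines these trajectories to a fixed compact subset of $\M$, Floer's compactness theorem extracts a $C^\infty_{\mathrm{loc}}$-convergent subsequence whose limit is a possibly broken Floer trajectory of the autonomous pair $(H^{s_0}, J^{s_0})$ of zero total energy, hence constant at some orbit $z_0 \in \mcP(H^{s_0})$. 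Endpoint matching forces $z = z_0 = z'$, a contradiction.

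For (3), fix $z$ and pick an arbitrarily small neighbourhood $U_z$ of the loop $z$ in $\M$. Applied to rigid trajectories with both ends in $B_z^\pm$, the same compactness argument shows that for $\delta$ sufficiently small every such trajectory has image inside $U_z$, since otherwise a sequence of counterexamples would converge to a nonconstant limit, contradicting zero energy. Given the convention that the preferred lift of every $x \in B_z^\pm$ lies close to the fixed lift of $z$ in $\widetilde{\M}$, the lift of such a trajectory starting at the chosen lift of $x$ remains in a neighbourhood of the chosen lift of $z$, so its other endpoint is the chosen lift of $y$ rather than a nontrivial covering translate. The recorded $\pi_1$-element is therefore the identity and the matrix coefficient is $\pm 1 \in \Z$.

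The main obstacle is to verify Floer compactness uniformly over the varying pair $(H^{s_\pm}, J^{s_\pm})$ and, in the continuation case, over the reparametrized family $(H^{\eta(s)}, J^{\eta(s)})$: one needs the $s$-derivative terms $\partial_s H^{\eta(s)}$ to vanish in the limit so that limiting curves satisfy the autonomous Floer equation at $(H^{s_0}, J^{s_0})$, and one must secure a uniform $C^0$-bound on trajectories despite the non-compactness of $\M$. Both points are standard for smoothly varying families with cylindrical $J$ at infinity, but they are what allows the zero-energy limit argument to close.
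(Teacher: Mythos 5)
Your overall strategy --- derive $E(u_n)\to 0$ from the energy--action identity \eqref{eq:cont_energy} and conclude by a zero-energy compactness argument --- is exactly the paper's (the paper packages it as Lemma \ref{lma:compactness}, an emptiness statement for the relevant moduli spaces, from which the Proposition follows formally). But there is a genuine gap at the decisive step, the sentence ``endpoint matching forces $z=z_0=z'$''. A $C^\infty_{\mathrm{loc}}$-convergent subsequence of the $u_n$, taken without a judicious choice of translations, may simply converge to the constant cylinder at $z$: the portion of $u_n$ that travels over to the other orbit can escape to $s=-\infty$, and then the local limit carries no information about the negative asymptote, so no contradiction results. To repair this one would normally invoke the full broken-trajectory compactness theorem, whose conclusion does include endpoint matching; but that theorem is not available off the shelf here, because the Floer data $(H^{s_0\pm b_n},J^{s_0\pm b_n})$ varies with $n$ and, more seriously, because the paper explicitly allows $H^{s_0}$ to be degenerate --- this is precisely the generic situation at the finitely many parameter values where the filtration can change --- so the usual nondegenerate-asymptotics analysis at the ends does not apply.

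The paper closes this gap with an elementary device your argument is missing: fix a ball $B$ around $x(0)$ whose closure contains $w(0)$ for no other $w\in\mcP(s_0)$; since $u_n(\cdot,0)$ begins near $x(0)\in B$ and ends near $y(0)\notin\overline{B}$, it must hit $\partial B$, and translating each $u_n$ by its first hitting time yields a sequence whose zero-energy local limit is a constant orbit passing through $\partial B$ --- impossible by the choice of $B$. (For the continuation case the translated curves solve the equation for $H^{\sigma(s+s_n,b_n)}$, which still converges to $H^{s_0}$.) The same trick, applied according to whether the arc $s\mapsto u_n(s,0)$ is homotopic rel endpoints into $B$, gives the $g=1$ statements; your version of (3), confining the entire image of the trajectory to a neighbourhood $U_z$ of the loop $z$, can be made to work but needs the extra observations that $U_z$ may be chosen to miss the other (finitely many) orbits and to lift to $\widetilde\M$. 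Your remaining points ($C^0$ bounds from the maximum principle, no bubbling since $\omega|_{\pi_2}=0$, the $\partial_sH$ term vanishing in the limit) match the paper.
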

\begin{rmk}\label{rmk:nonregular}
	If the pairs $(H^\pm, J^\pm)$ or the path $(H^\eta, J^\eta)$ are non-regular, the conclusion still holds for regular pairs or paths sufficiently close. 
\end{rmk}
The proof follows Lemma \ref{lma:compactness}. This gives us
\begin{prop}\label{prop:conclusion}
	The continuation map $\Phi \fr CF^* (H^+) \to CF^*(H^-)$ is a simple homotopy equivalence. 
\end{prop}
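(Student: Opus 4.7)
The plan is to chop the path $(H^s, J^s)$ into many small steps to which Proposition \ref{prop:small_step_simple} applies, reduce the simpleness of each step to that of the associated graded maps via Lemma \ref{lma:alg_lma}, and finally observe that each associated graded map is block-diagonal with integer blocks, whence simple because $K_1(\Z) = \{\pm 1\}$.

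First, by compactness of the interval outside which $(H^s, J^s)$ is constant, cover it by finitely many open subintervals, each small enough that Proposition \ref{prop:small_step_simple} applies to any pair of parameter values in it with a reference point $s_0$ also in the interval. Choose a subdivision $s_- = t_0 < \dotsb < t_N = s_+$ refining this cover, where each pair $(H^{t_i}, J^{t_i})$ is regular; this is possible since regular pairs are dense, and Remark \ref{rmk:nonregular} permits further perturbation if needed. Then $\Phi$ is chain-homotopic to the composition of the continuation maps $\phi_i \fr CF^*(H^{t_i}) \to CF^*(H^{t_{i-1}})$ induced by the path restricted to $[t_{i-1}, t_i]$. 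Since composition of simple equivalences is simple and simpleness is invariant under chain homotopy \cite{Cohen73}, it suffices to prove each $\phi_i$ is simple.

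Fix one such $\phi \fr CF^*(H^{s_+}) \to CF^*(H^{s_-})$. By Proposition \ref{prop:small_step_simple} the action filtrations on both complexes satisfy the hypotheses of Lemma \ref{lma:alg_lma}, so it is enough to show that each $\phi^p \fr G^p(H^{s_+}) \to G^p(H^{s_-})$ is simple. The same proposition decomposes $\phi^p$ as a direct sum over orbits $z \in \mcP(H^{s_0})$ of action $b_p$ of blocks $\phi^p_z \fr \Lambda B_z^+ \to \Lambda B_z^-$; the cone of $\phi^p$ splits as the direct sum of the cones of the $\phi^p_z$, so each $\phi^p_z$ is itself a chain equivalence.

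The crucial observation is that the differentials on $\Lambda B_z^\pm$ and the map $\phi^p_z$ all have entries in $\Z \subset \Lambda$. Hence each $\phi^p_z$ arises by extension of scalars from a chain equivalence $\psi_z$ of based $\Z$-complexes. Every move of type (1) or (2) allowed over $\Z$ remains a valid move after applying the inclusion $\Z \hookrightarrow \Lambda$, since the only units $\pm 1$ over $\Z$ lie in $\pm \pi_1(\LdomainM)$; and $\psi_z$ is simple over $\Z$ because $K_1(\Z) = \{\pm 1\}$. Pushing a sequence of moves trivializing the cone of $\psi_z$ forward to $\Lambda$ shows $\phi^p_z$ is simple. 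Reassembling blocks into $\phi^p$, invoking Lemma \ref{lma:alg_lma} to promote this to simpleness of $\phi$, and taking the composition gives $\Phi$. The main obstacle is really the analytic work packaged in Proposition \ref{prop:small_step_simple}; once its block-diagonal, integer-coefficient conclusion is in hand, the present argument is essentially formal.
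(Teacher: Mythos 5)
Your proposal is correct and follows essentially the same route as the paper: subdivide the path into small steps covered by Proposition \ref{prop:small_step_simple}, reduce each step to its associated graded pieces via Lemma \ref{lma:alg_lma}, and observe that these are extensions of scalars of $\Z$-complex equivalences, which are automatically simple. Your explicit justification via $K_1(\Z)=\{\pm 1\}$ and the compatibility of moves under $\Z\hookrightarrow\Lambda$ simply spells out what the paper delegates to \cite{Cohen73}.
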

\begin{proof}
	The $s$-dependence of $(H^s, J^s)$ is a compact set $A \subset \R$. We can choose finitely many $s_i$ and an $\delta_{\mathrm{min}} >0$ such that the intervals $(s_i - \delta_\mathrm{min}, s_i + \delta_\mathrm{min})$ cover $A$ and Proposition \ref{prop:small_step_simple} applies whenever $\delta< \delta_\mathrm{min}$.

Choose $r_i \in (s_{i+1} - \delta_\mathrm{min}, s_i + \delta_\mathrm{min})$. By Remark \ref{rmk:nonregular} we may assume that $(H^{r_i}, J^{r_i})$ respectively the induced paths $(H^{\eta_i}, J^{\eta_i})$ are regular, and there are thus equivalences
\[
	\phi_i \fr CF^* (H^{r_{i+1}}) \to CF^* (H^{r_i}).
\]
These are simple: from Proposition \ref{prop:small_step_simple} we may regard the maps $\phi_i^p$ of associated groups as maps of $\Z$-complexes with extended scalars; maps of $\Z$-complexes are always simple and the same holds for extensions of simple maps \cite{Cohen73}. Thus by Lemma \ref{lma:alg_lma} so are the $\phi_i$. Hence there is a map
\[
	\Phi' \fr CF^*(H^+) \to CF^*(H^-)
\]
which is formed by the composition of the simple $\phi_i$. Being a composition of simple equivalences, $\Phi'$ is simple, too \cite{Cohen73}. Finally note that $\Phi$ is homotopic to $\Phi'$ and thus also simple \cite{Cohen73}.
\end{proof}
 
It remains to prove Proposition \ref{prop:small_step_simple}. The situation is as follows. We have the path $(H^s,J^s)$ and the chosen value $s_0$. Let $\sigma(s,b) $, $b \in [0,1]$, be a smooth homotopy from the constant function $s \mapsto s_0$ to a function increasing from $s_0-1$ to $s_0+1$ such that 
\begin{align*}
	\sigma(s_0,b) & = s_0 \mbox{ for all } b \\
	\sigma(s,b)& = s_0 - b \mbox{ for } s<s_0 - 1 \mbox{ and all } b \\
	\sigma(s,b) & = s_0 + b \mbox{ for } s> s_0+1 \mbox{ and all } b \\
	\partial_s \sigma & \geq 0.
\end{align*}

Consider the family $H^{\sigma (s,b)}$, suppressing the almost complex structure $J^{\sigma(s,b)}$. 

Let $\delta>0$ be so small that 
  if $\gamma$ is an orbit of $X^s$ with $|s-s_0|<\delta$ it makes sense to say that $\gamma$ lies close to some unique orbit of $X^{s_0}$. Let $\mcP(s) = \mcP (H^s)$.
 We will use the following notation for moduli spaces: $\msppartial{gx}{y}{H}$ contains curves contributing $\pm gx$ to $\partial y$, whenever the dimension is correct, and similarly the spaces $\mspspartial{gx}{y}{H^s}$ defines the image of $y$ under the continuation map. 
\begin{lma}\label{lma:compactness}
	Let $H^s, J^s$ be a given admissible path, fix $s_0 \in \R$, and let $\delta$ be as above. There is a $\beta \in (0, \delta)$, such that for any $b\in (0,\beta)$, any $x, y \in \mcP (s_0) $ with $\mcA_{s_0}(x ) = \mcA_{s_0}(y)$, and any $x'$ close to $x$, $y'$ close to $y$, the following spaces are empty.
\begin{enumerate}
	\item For $x\neq y$, 
	 $x' \in \mcP(s_0 - b), y' \in \mcP(s_0 + b)$, $\mspspartial{ gx'}{ y'}{H^{\sigma(s,b)}} = \emptyset$.
	\item For $x\neq y$, $x', y' \in \mcP(s_0 \pm b) $, $\msppartial{gx'}{ y'}{H^{s_0 \pm b}} = \emptyset$.
 	\item For $x=y$, $x' \in \mcP(s_0 - b), y' \in \mcP(s_0 + b)$, $\mspspartial{g  x'}{   y'} 
{H^{\sigma(s,b)}} = \emptyset$, unless $g=1$.
	\item For $x=y$, $x',y' \in \mcP(s_0 \pm b) $, $\msppartial{g  x'}{ y'}{H^{s_0  \pm b}} = \emptyset$, unless $g=1$.
\end{enumerate}
In the latter two cases the lifts of $ x',  y'$ are chosen such that the lifts of  $ x'(0) , y'(0),$ and $x(0)$ lie in the same sheet above $x(0)$.
\end{lma}

\begin{proof}
Consider part 1. Let $x, y \in \mcP (s_0)$ be different and fix a small ball $B$ at $x(0)$ such that the closure of $B$ does not contain $z(0)$ for any $z \in \mcP(s_0) \setminus{x}$. Consider a sequence $b_n\to 0$ and corresponding maps $u_n \in \mspspartial{x_n}{y_n}{H^{\sigma(s,b_n)}}$, where the ends are close to $x$ respectively $y$ as in the Lemma. Thus $E(u_n) \to 0$. Let $s_n$ be the smallest number for which $u_n(s_n, 0)$ hits $\partial B$. The shifted sequence $w_n = u_n(\cdot +s_n, \cdot)$ solves \eqref{eq:floereq} for data $H^{\sigma(s + s_n, b_n)}$, which still converges to $H^{s_0}$ as $n\to \infty$.

We apply Gromov compactness to the sequence $w_n$, so that after passing to a subsequence we find a limit $v \fr \R \times S^1 \to \M$ which solves \eqref{eq:floereq} with data $H^{s_0}$; since $E(v) \leq \lim E(w_n) = 0$ we must have $v(s,t) = z(t) $ for some $z \in \mcP(H^{s_0})$. But  $w_n$ satisfies $w_n(0,0)\in \partial B$, hence $z(0) = v (0,0) \in \partial B$, which contradicts the choice of $B$.

Hence there is no such sequence $u_n$. We conclude that there is $\beta(x,y) > 0$ such that for $b \in (0,\beta)$ there are no solutions of \eqref{eq:floereq} between any $x' \in \mcP(s_0 - b)$ near $x$ and any $y' \in \mcP(s_0 + b)$ near $y$. The set $\mcP(s_0) $ is finite, so we can take $\beta$ as the minimum among $\beta(x,y)$. This proves part 1. Part 2 is similar.

Assume that the situation is the same as above but that $y=x\in \mcP(s_0)$. Consider a $u_n$, and suppose that the path $s \mapsto u_n(s,0)$ is homotopic, relative ends, to a path contained in the neighbourhood $B$ of $x(0)$. In this case there is nothing to prove, so we may assume that it is not so for any $u_n$. Then the hitting times $s_n$ are well defined and the argument proceeds as above. This proves parts 3 and 4.
\end{proof}

\begin{proof}[Proof of proposition \ref{prop:small_step_simple}.]
Take $\delta < \beta$. 
By perturbing slightly, we may assume that the conclusions of Lemma \ref{lma:compactness} hold and that the pairs $(H^\pm, J^\pm)$ and the path  $(H^\eta, J^\eta) =(H^{\sigma(s,b)}, J^{\sigma( s, b)})$ are regular.

Then part 2 of Lemma \ref{lma:compactness} gives that the splitting \eqref{eq:splitting} is a splitting of subcomplexes, and part 1 that $\phi$ preserves it. Parts 3 and 4 give that the matrices of the maps $\partial^+, \partial^-,$ and $\phi$ have integer coefficients. 
\end{proof}

\bibliographystyle{abbrv}
\bibliography{../bib/researchdefault}

\end{document}